\documentclass{amsart}
\usepackage{amssymb}
\usepackage{amsfonts}

\setcounter{MaxMatrixCols}{10}

\newtheorem{theorem}{Theorem}
\theoremstyle{plain}

\numberwithin{equation}{section}
\input{tcilatex}

\begin{document}
\title[Inequality for Twice Differentiable Functions]{On the Ostrowski-Gr%
\"{u}ss Type Inequality for Twice Differentiable Functions}
\author{$^{\blacktriangledown }$M. Emin \"{O}zdemir}
\address{$^{\blacktriangledown }$Ataturk University, K. K. Education
Faculty, Department of Mathematics, 25640, Kampus, Erzurum, Turkey}
\email{emos@atauni.edu.tr}
\author{$^{\spadesuit ,\bigstar }$Ahmet Ocak Akdemir}
\address{$^{\bigstar }$A\u{g}r\i\ \.{I}brahim \c{C}e\c{c}en University,
Faculty of Science and Arts, Department of Mathematics, 04100, A\u{g}r\i ,
Turkey}
\email{ahmetakdemir@agri.edu.tr}
\author{$^{\blacktriangledown }$Erhan Set}
\address{$^{\blacktriangledown }$Ataturk University, K. K. Education
Faculty, Department of Mathematics, 25640, Kampus, Erzurum, Turkey}
\email{erhanset@yahoo.com}
\date{October 20, 2010}
\subjclass[2000]{ Primary 26D15, 26A07}
\keywords{Ostrowski-Gr\"{u}ss Inequality\\
$^{\spadesuit }Corresponding$ $Author$}

\begin{abstract}
In this paper, we obtained some new Ostrowski-Gr\"{u}ss type inequalities
contains twice differentiable functions.
\end{abstract}

\maketitle

\section{Introduction}

In \cite{OST}, Ostrowski proved the following inequality.

\begin{theorem}
Let $f:I\rightarrow 
\mathbb{R}
,$ where $I\subset 
\mathbb{R}
$ is an interval, be a mapping differentiable in the interior of $I$ and $%
a,b\in I^{o},a<b.$ If $\left\vert f^{\prime }\right\vert \leq M,\forall t\in
\lbrack a,b],$ then we have%
\begin{equation}
\left\vert f(x)-\frac{1}{b-a}\dint\limits_{a}^{b}f(t)dt\right\vert \leq %
\left[ \frac{1}{4}+\frac{\left( x-\frac{a+b}{2}\right) ^{2}}{\left(
b-a\right) ^{2}}\right] \left( b-a\right) M,  \label{1.1}
\end{equation}%
for $x\in \lbrack a,b].$
\end{theorem}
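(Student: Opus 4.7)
The plan is to follow the standard Peano-kernel route, which is the cleanest way to isolate $f(x)$ on the left-hand side and control the remainder by $\|f'\|_\infty$.

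First I would introduce the piecewise linear kernel
\[
K(x,t) = \begin{cases} t - a, & t \in [a,x], \\ t - b, & t \in (x,b], \end{cases}
\]
and compute $\int_a^b K(x,t)\,f'(t)\,dt$ by splitting the integral at $t=x$ and integrating by parts on each piece. On $[a,x]$ the boundary term yields $(x-a)f(x)$; on $[x,b]$ it yields $(b-x)f(x)$. Adding them and subtracting the two ordinary integrals of $f$ gives the identity
\[
(b-a)\,f(x) - \int_a^b f(t)\,dt \;=\; \int_a^b K(x,t)\,f'(t)\,dt.
\]

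Next, I would divide by $b-a$, take absolute values, and use the hypothesis $|f'(t)| \leq M$ to obtain
\[
\left| f(x) - \frac{1}{b-a}\int_a^b f(t)\,dt \right| \;\leq\; \frac{M}{b-a}\int_a^b |K(x,t)|\,dt.
\]
A direct evaluation gives $\int_a^b |K(x,t)|\,dt = \frac{(x-a)^2}{2} + \frac{(b-x)^2}{2}$.

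The remaining task is purely algebraic: rewrite $\tfrac12[(x-a)^2+(b-x)^2]$ in terms of the midpoint $c = \frac{a+b}{2}$. Since $c-a = b-c = \frac{b-a}{2}$, expanding around $c$ makes the cross terms cancel and produces
\[
\tfrac{1}{2}\bigl[(x-a)^2 + (b-x)^2\bigr] \;=\; \left(x - \tfrac{a+b}{2}\right)^{2} + \tfrac{(b-a)^2}{4}.
\]
Dividing by $b-a$ and multiplying by $M$ yields exactly $\left[\tfrac{1}{4} + \frac{(x-\frac{a+b}{2})^2}{(b-a)^2}\right](b-a)M$, which is (\ref{1.1}).

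I do not anticipate a genuine obstacle here: the only subtlety is making sure the kernel is chosen so that the integration by parts actually produces $f(x)$ rather than just boundary values of $f$, which is why $K$ jumps from $x-a$ to $x-b$ at $t=x$. Once that is set up, the rest is routine computation and the identity $c-a = b-c$.
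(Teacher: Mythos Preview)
Your argument is correct and is in fact the classical Peano-kernel proof of Ostrowski's inequality. Note, however, that the paper does not supply its own proof of this statement: Theorem~1 is quoted in the Introduction as a known result from Ostrowski's 1938 paper, with no proof given. So there is nothing in the paper to compare your proof against.

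That said, your approach is very much in the spirit of the paper's own work. In the proof of their Theorem~4 the authors introduce the quadratic kernel
\[
K(x,t)=\begin{cases}\dfrac{t}{2}(t-2a),& t\in[a,x],\\[1ex]\dfrac{t}{2}(t-2b),& t\in(x,b],\end{cases}
\]
integrate $K(x,t)f''(t)$ by parts, and then estimate the remainder --- exactly the second-derivative analogue of the linear kernel $K(x,t)=t-a$ or $t-b$ that you use against $f'$. So while the paper does not prove Theorem~1, your method is the natural first-derivative version of the technique they apply to their main results.
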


In the past several years there has been considerable interest in the study
of Ostrowski type inequalities. In \cite{ME}, \"{O}zdemir et al. proved
Ostrowski's type inequalities for $\left( \alpha ,m\right) -$convex
functions and in \cite{MZ}, an Ostrowski type inequality was given by Sar\i
kaya. However, some new types inequalities are established, for example
inequalities of Ostrowski-Gr\"{u}ss type and inequalities of
Ostrowski-Chebyshev type. In \cite{MP}, Milovanovic and Pecaric gave
generalization of Ostrowski's inequality and some related applications. It
was for the first time that Ostrowski-Gr\"{u}ss type inequality was given by
Dragomir and Wang in \cite{SW}. In \cite{MPU}, Matic et al., generalized and
improved this inequality. For generalizations, improvements and recent
results see the papers \cite{MP}, \cite{CHENG}, \cite{MPU}, \cite{SW} and 
\cite{ANA}. Recently, in \cite{UJE}, Ujevic proved following theorems;

\begin{theorem}
Let $f:I\rightarrow 
\mathbb{R}
,$ where $I\subset 
\mathbb{R}
$ is an interval, be a mapping differentiable in the interior of $I$ and $%
a,b\in I^{o},a<b.$ If there exist constants $\gamma ,\Gamma \in $ $%
\mathbb{R}
$ such that $\gamma \leq f^{\prime }(t)\leq \Gamma ,\forall t\in \lbrack
a,b] $ and $f^{\prime }\in L_{1}[a,b]$, then we have%
\begin{equation}
\left\vert f(x)-\left( x-\frac{a+b}{2}\right) \frac{f(b)-f(a)}{b-a}-\frac{1}{%
b-a}\dint\limits_{a}^{b}f(t)dt\right\vert \leq \frac{\left( b-a\right) }{2}%
\left( S-\gamma \right)  \label{1.2}
\end{equation}%
and%
\begin{equation}
\left\vert f(x)-\left( x-\frac{a+b}{2}\right) \frac{f(b)-f(a)}{b-a}-\frac{1}{%
b-a}\dint\limits_{a}^{b}f(t)dt\right\vert \leq \frac{\left( b-a\right) }{2}%
\left( \Gamma -S\right) ,  \label{1.3}
\end{equation}%
where $S=\frac{f(b)-f(a)}{b-a}.$
\end{theorem}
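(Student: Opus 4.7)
The plan is to use a modified Peano-kernel representation and exploit the assumed bounds on $f'$. The central tool is the Montgomery identity,
\[
f(x) = \frac{1}{b-a}\int_a^b f(t)\,dt + \int_a^b P(x,t) f'(t)\,dt,
\]
where $P(x,t) = (t-a)/(b-a)$ for $t \in [a,x]$ and $P(x,t) = (t-b)/(b-a)$ for $t \in (x,b]$. A direct integration gives $\int_a^b P(x,t)\,dt = x - (a+b)/2$, so the modified kernel
\[
K(x,t) := P(x,t) - \frac{1}{b-a}\left(x - \frac{a+b}{2}\right)
\]
has mean zero in $t$ on $[a,b]$.

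Combining Montgomery with $\int_a^b f'(t)\,dt = (b-a) S$, I expect the left-hand side of (1.2) to reduce to $\bigl|\int_a^b K(x,t) f'(t)\,dt\bigr|$. Since $K(x,\cdot)$ has zero mean, the integrand may be replaced by $K(x,t)(f'(t) - \gamma)$ without changing the value of the integral. The hypothesis $f'(t) \geq \gamma$ then gives
\[
\left|\int_a^b K(x,t)(f'(t) - \gamma)\,dt\right| \leq \|K(x,\cdot)\|_\infty \int_a^b (f'(t) - \gamma)\,dt = \|K(x,\cdot)\|_\infty (b-a)(S - \gamma).
\]
Inequality (1.3) will follow by the mirror argument, replacing $\gamma$ by $\Gamma$ and using $f'(t) \leq \Gamma$ to produce $(b-a)(\Gamma - S)$.

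What remains is to show $\|K(x,\cdot)\|_\infty = 1/2$ for every $x \in [a,b]$, and this is the only step requiring a careful calculation. On $[a,x]$ the function $K(x,t)$ is linear with $K(x,a) = -(x-(a+b)/2)/(b-a)$ and $K(x,x^{-}) = 1/2$; on $(x,b]$ it is linear with $K(x,x^{+}) = -1/2$ and $K(x,b) = -(x-(a+b)/2)/(b-a)$. Since $x \in [a,b]$ forces $|x - (a+b)/2|/(b-a) \leq 1/2$, the supremum norm is exactly $1/2$, which supplies the factor $(b-a)/2$ in the bound. The main obstacle to watch for is keeping the signs at the jump $t = x$ straight; once the zero-mean kernel $K$ is introduced, the rest is essentially mechanical.
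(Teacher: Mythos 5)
Your argument is correct and complete: the Montgomery identity, the zero-mean shifted kernel, the replacement of $f'(t)$ by $f'(t)-\gamma$ (resp.\ $f'(t)-\Gamma$), and the computation $\max_{t\in[a,b]}|K(x,t)|=\tfrac12$ all check out and yield exactly the factor $\tfrac{b-a}{2}$. The paper only quotes this theorem from Ujevic without proof, but your method is precisely the scheme the paper uses for its own second-derivative analogue (its main theorem): write the left-hand side as $\frac{1}{b-a}\int_a^b (f'(t)-C)\bigl[K(x,t)-\frac{1}{b-a}\int_a^b K(x,s)\,ds\bigr]dt$ for an arbitrary constant $C$, choose $C=\gamma$ or $C=\Gamma$, and bound by the sup norm of the shifted kernel times $\int_a^b|f'(t)-C|\,dt$.
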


\begin{theorem}
Let $f:I\rightarrow 
\mathbb{R}
,$ where $I\subset 
\mathbb{R}
$ is an interval, be a twice continuously differentiable mapping in the
interior of $I$ with $f^{\prime \prime }\in L_{2}[a,b]$ and $a,b\in
I^{o},a<b.$ Then we have%
\begin{equation}
\left\vert f(x)-\left( x-\frac{a+b}{2}\right) \frac{f(b)-f(a)}{b-a}-\frac{1}{%
b-a}\dint\limits_{a}^{b}f(t)dt\right\vert \leq \frac{\left( b-a\right) ^{%
\frac{3}{2}}}{2\pi \sqrt{3}}\left\Vert f^{\prime \prime }\right\Vert _{2},
\label{1.4}
\end{equation}%
for $x\in \lbrack a,b].$
\end{theorem}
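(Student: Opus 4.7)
The plan is to combine the standard Ostrowski--Peano kernel identity with a centered Cauchy--Schwarz estimate and a sharp Poincar\'e (Wirtinger) inequality. First I would introduce the kernel $p(x,t) = t-a$ for $t \in [a,x]$ and $p(x,t) = t-b$ for $t \in (x,b]$. Integration by parts on each subinterval gives
\[
\int_a^b p(x,t)\, f'(t)\, dt = (b-a)\, f(x) - \int_a^b f(t)\, dt.
\]
An elementary calculation also yields $\int_a^b p(x,t)\, dt = (b-a)\left(x - \frac{a+b}{2}\right)$, so replacing $f'(t)$ by $f'(t) - S$ with $S = \frac{f(b) - f(a)}{b-a}$ and dividing by $b - a$ produces exactly the expression on the left of (1.4).

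Since $\int_a^b [f'(t) - S]\, dt = 0$, one may subtract from $p(x,t)$ any function of $x$ alone without altering the integral against $f' - S$. I would take this constant to be the $t$-mean $c(x) := x - \frac{a+b}{2}$ of $p(x, \cdot)$, which minimizes the $L_2$ norm of the modified kernel. A short computation using the substitution $u = x - a$, $v = b - x$ (so $u+v = b-a$) collapses the expansion $u^3/3 + v^3/3 - (u-v)^2(u+v)/4$ into
\[
\int_a^b \bigl[p(x,t) - c(x)\bigr]^2\, dt = \frac{(u+v)^3}{12} = \frac{(b-a)^3}{12},
\]
remarkably independent of $x$. Applying Cauchy--Schwarz then bounds the centered integral by $\frac{(b-a)^{3/2}}{2\sqrt{3}}\, \|f' - S\|_2$.

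Next, observe that $g(t) := f'(t) - S$ has $\int_a^b g\, dt = 0$ and $g' = f'' \in L_2[a,b]$. The sharp Neumann Poincar\'e--Wirtinger inequality on $[a,b]$,
\[
\int_a^b g(t)^2\, dt \leq \frac{(b-a)^2}{\pi^2} \int_a^b g'(t)^2\, dt,
\]
gives $\|f' - S\|_2 \leq \frac{b-a}{\pi}\, \|f''\|_2$. Combining with the previous step and dividing by $b-a$ delivers the factor $\frac{1}{2\pi\sqrt{3}}(b-a)^{3/2}\|f''\|_2$ required by (1.4).

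The main obstacle is obtaining the factor $\pi$: it is not accessible by crude bounds on the kernel and can only be produced by invoking the sharp Neumann Poincar\'e constant, which relies crucially on $f' - S$ being mean zero. The additional factor of two improvement over the naive constant $\frac{1}{\pi\sqrt{3}}$ comes from pre-centering the kernel by its $t$-mean before Cauchy--Schwarz, which trades a non-uniform $\frac{u^3+v^3}{3}$ (maximized at $(b-a)^3/3$) for the uniform-in-$x$ value $\frac{(b-a)^3}{12}$.
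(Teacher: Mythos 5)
Your proof is correct and complete. Note that the paper itself offers no proof of this statement: it is quoted in the introduction as a known theorem of Ujevic and attributed to \cite{UJE}, so there is no in-paper argument to compare against; your route (Peano kernel, centering, Cauchy--Schwarz, sharp Wirtinger) is in fact the standard one behind the constant $\frac{1}{2\pi\sqrt 3}$. The individual steps all check out: integration by parts with $p(x,t)$ gives $(b-a)f(x)-\int_a^b f(t)\,dt$; the mean of $p(x,\cdot)$ is $x-\frac{a+b}{2}$, so pairing $p-c(x)$ against $f'-S$ (legitimate since $\int_a^b (f'-S)=0$) reproduces the left side of (\ref{1.4}) times $b-a$; the identity
\begin{equation*}
\frac{u^{3}+v^{3}}{3}-\frac{(u-v)^{2}(u+v)}{4}=(u+v)\,\frac{4(u^{2}-uv+v^{2})-3(u^{2}-2uv+v^{2})}{12}=\frac{(u+v)^{3}}{12}
\end{equation*}
confirms $\int_a^b [p(x,t)-c(x)]^2\,dt=\frac{(b-a)^3}{12}$ independently of $x$; and the sharp Neumann Poincar\'e--Wirtinger constant $\frac{b-a}{\pi}$ applies to $g=f'-S$ because $g$ has mean zero and $g'=f''\in L_2[a,b]$. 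Multiplying $\frac{(b-a)^{3/2}}{2\sqrt 3}\cdot\frac{b-a}{\pi}\left\Vert f''\right\Vert_2$ and dividing by $b-a$ gives exactly the right side of (\ref{1.4}). Your closing remarks correctly identify where each factor comes from; the only thing I would add is an explicit sentence that subtracting $c(x)$ from the kernel does not change the integral precisely because $f'-S$ has zero mean, which you state but should display as the one-line computation it is.
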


The main purpose of this paper is to prove Ostrowski-Gr\"{u}ss type
inequalities similar to above but now for involving twice differentiable
mappings.

\section{Main Results}

\begin{theorem}
Let $f:I\rightarrow 
\mathbb{R}
,$ where $I\subset 
\mathbb{R}
$ is an interval, be a twice differentiable mapping in the interior of $I$
and $a,b\in I^{o},a<b.$ If there exist constants $\gamma ,\Gamma \in $ $%
\mathbb{R}
$ such that $\gamma \leq f^{\prime \prime }(t)\leq \Gamma ,\forall t\in
\lbrack a,b]$ and $f^{\prime \prime }\in L_{2}[a,b]$, then we have%
\begin{eqnarray}
&&\left\vert f(x)-xf^{\prime }(x)-\frac{a^{2}f^{\prime }(a)-b^{2}f^{\prime
}(b)}{2\left( b-a\right) }\right.  \label{2.1} \\
&&\left. -\left( \frac{x^{2}}{2}-\frac{a^{2}+ab+b^{2}}{3}\right) \frac{%
f^{\prime }(b)-f^{\prime }(a)}{b-a}-\frac{1}{b-a}\dint\limits_{a}^{b}f(t)dt%
\right\vert  \notag \\
&\leq &\frac{\left( b-a\right) ^{2}}{3}\left( S-\gamma \right)  \notag
\end{eqnarray}%
and%
\begin{eqnarray}
&&\left\vert f(x)-xf^{\prime }(x)-\frac{a^{2}f^{\prime }(a)-b^{2}f^{\prime
}(b)}{2\left( b-a\right) }\right.  \label{2.2} \\
&&\left. -\left( \frac{x^{2}}{2}-\frac{a^{2}+ab+b^{2}}{3}\right) \frac{%
f^{\prime }(b)-f^{\prime }(a)}{b-a}-\frac{1}{b-a}\dint\limits_{a}^{b}f(t)dt%
\right\vert  \notag \\
&\leq &\frac{\left( b-a\right) ^{2}}{3}\left( \Gamma -S\right)  \notag
\end{eqnarray}%
where $S=\frac{f^{\prime }(b)-f^{\prime }(a)}{b-a}.$
\end{theorem}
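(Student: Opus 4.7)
The plan is to mimic the proof of Ujevi\'c's Theorem~2 one derivative higher: construct a Peano-type kernel $K(x,t)$ that is piecewise quadratic in $t$ on the two subintervals $[a,x]$ and $[x,b]$, apply integration by parts twice to recognize the quantity inside the absolute value in (2.1) as a constant multiple of $\int_{a}^{b}K(x,t)\,f''(t)\,dt$, and then exploit the mean-zero property $\int_{a}^{b}(f''(t)-S)\,dt = f'(b)-f'(a)-(b-a)S = 0$ together with the two-sided bound $\gamma \leq f'' \leq \Gamma$ to produce the right-hand side.

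Concretely, I try the ansatz $K(x,t) = \tfrac{t^{2}}{2}+A_{i}\,t+B_{i}$ on each piece ($i=1$ for $[a,x]$, $i=2$ for $[x,b]$). Then $K''\equiv 1$, so the second integration by parts recovers $\int_{a}^{b}f(t)\,dt$. To kill the $f(a)$ and $f(b)$ boundary contributions---neither appears in (2.1)---I set $A_{1}=-a$ and $A_{2}=-b$, so that $K'(a)=K'(b)=0$. Matching the remaining coefficients of $f'(x)$, $f'(a)$, and $f'(b)$ in (2.1) then forces $B_{1}=B_{2}=\tfrac{x^{2}}{2}-\tfrac{a^{2}+ab+b^{2}}{3}$, and the resulting identity has the shape
\[
\text{(LHS of (2.1))}\;=\;-\frac{1}{b-a}\int_{a}^{b}K(x,t)\bigl(f''(t)-S\bigr)\,dt,
\]
where the $S\int K$ piece produced by rewriting $\int K f''=\int K(f''-S)+S\int K$ is absorbed into the $\bigl(\tfrac{x^{2}}{2}-\tfrac{a^{2}+ab+b^{2}}{3}\bigr)S$ contribution on the left.

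With this identity in hand the bound is standard. Setting $\bar K(x)=\tfrac{1}{b-a}\int_{a}^{b}K(x,t)\,dt$ and $\widetilde K = K-\bar K$, the relation $\int_{a}^{b}(f''-S)\,dt=0$ lets one replace $K$ by $\widetilde K$ inside the integral; the positivity $f''(t)-\gamma\geq 0$, combined with $\int_{a}^{b}\widetilde K\,dt=0$, then yields the pre-Gr\"{u}ss identity
\[
\int_{a}^{b}\widetilde K(x,t)\,(f''(t)-S)\,dt \;=\; \int_{a}^{b}\widetilde K(x,t)\,(f''(t)-\gamma)\,dt,
\]
from which
\[
\left|\int_{a}^{b}\widetilde K\,(f''-\gamma)\,dt\right|\;\leq\;\|\widetilde K(x,\cdot)\|_{\infty}\int_{a}^{b}(f''-\gamma)\,dt\;=\;(b-a)\,\|\widetilde K\|_{\infty}\,(S-\gamma).
\]
This gives (2.1); the symmetric inequality (2.2) follows by using $\Gamma-f''(t)\geq 0$ in place of $f''(t)-\gamma\geq 0$.

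The principal obstacle will be the explicit computation of $\|\widetilde K(x,\cdot)\|_{\infty}$: the kernel is piecewise quadratic in $t$ with a jump at $t=x$, so one must locate its critical points on each of $[a,x]$ and $[x,b]$, compare them against the endpoint and jump values, and arrange the estimate so that the supremum (uniformly in $x\in[a,b]$) delivers the advertised constant $\tfrac{(b-a)^{2}}{3}$. A secondary technicality is the bookkeeping in the absorption step: the residue $\tfrac{S}{b-a}\int_{a}^{b}K(x,t)\,dt$ must match the $\bigl(\tfrac{x^{2}}{2}-\tfrac{a^{2}+ab+b^{2}}{3}\bigr)S$ term in (2.1) exactly, which amounts to checking a polynomial identity in $x,a,b$ after evaluating the integral of $K(x,\cdot)$.
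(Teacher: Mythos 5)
Your plan reproduces the paper's own argument almost exactly: the kernel you arrive at, $K(x,t)=\tfrac{t^{2}}{2}-at$ on $[a,x]$ and $\tfrac{t^{2}}{2}-bt$ on $(x,b]$, is precisely the paper's $K(x,t)=\tfrac{t}{2}(t-2a)$, $\tfrac{t}{2}(t-2b)$; the double integration by parts, the replacement of $K$ by the mean-zero $\widetilde K=K-\tfrac{1}{b-a}\int_{a}^{b}K(x,s)\,ds$, and the choices $C=\gamma$ and $C=\Gamma$ are the paper's equations (2.3)--(2.11) verbatim. So this is not a different route, and the comparison reduces to the one step you explicitly defer.

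That deferred step --- what you call ``the principal obstacle,'' namely showing that $\|\widetilde K(x,\cdot)\|_{\infty}$ equals $\tfrac{(b-a)^{2}}{3}$ --- is a genuine gap, and it cannot be closed: the paper asserts $\max_{t\in[a,b]}\bigl|K(x,t)-\bigl(\tfrac{x^{2}}{2}-\tfrac{a^{2}+ab+b^{2}}{3}\bigr)\bigr|=\tfrac{(b-a)^{2}}{3}$ without proof, and the assertion is false. The two branches of $K$ carry different additive constants, since $\tfrac{t}{2}(t-2a)=\tfrac{(t-a)^{2}}{2}-\tfrac{a^{2}}{2}$ while $\tfrac{t}{2}(t-2b)=\tfrac{(t-b)^{2}}{2}-\tfrac{b^{2}}{2}$, so $\widetilde K$ is not invariant under translating $[a,b]$, whereas the claimed bound depends only on $b-a$. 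Already on $[a,b]=[0,1]$ the maximum is $\tfrac16+\tfrac{x^{2}}{2}$, which exceeds $\tfrac13$ for $x>1/\sqrt{3}$; on $[a,b]=[10,11]$ with $x=10.5$ one finds $\tfrac{1}{b-a}\int_{a}^{b}K(x,s)\,ds=\tfrac{x^2}{2}-\tfrac{331}{3}\approx -55.21$ and $|\widetilde K(x,t)|\approx 5.2$--$5.3$ on all of $[10,11]$, rather than $\le\tfrac13$. No rearrangement of the estimate rescues the constant, because the inequality itself fails: taking $f''(t)=t-a$ on $[10,11]$ with $x=10.5$ (so $\gamma=0$, $S=\tfrac12$) gives $|R_{n}(x)|\approx 1.31$, while the right-hand side of (2.1) is $\tfrac13\cdot\tfrac12=\tfrac16$. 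There is also a smaller defect in the ``absorption'' bookkeeping you flag at the end: carrying out that polynomial identity shows the term $\bigl(\tfrac{x^{2}}{2}-\tfrac{a^{2}+ab+b^{2}}{3}\bigr)S$ must enter the identity with the opposite sign to the one printed in (2.1). In short, you have correctly located the load-bearing step of the published proof, but that step is where both your proposal and the paper break down.
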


\begin{proof}
We can define $K(x,t)$ as following 
\begin{equation*}
K(x,t)=\left\{ 
\begin{array}{c}
\frac{t}{2}\left( t-2a\right) ,\text{ \ \ \ \ }t\in \lbrack a,x] \\ 
\\ 
\frac{t}{2}\left( t-2b\right) ,\text{ \ \ \ \ }t\in (x,b]%
\end{array}%
\right.
\end{equation*}%
Integrating by parts, we have%
\begin{eqnarray}
&&\frac{1}{b-a}\dint\limits_{a}^{b}K(x,t)f^{\prime \prime }(t)dt  \notag \\
&=&\frac{1}{b-a}\left[ \dint\limits_{a}^{x}\frac{t}{2}\left( t-2a\right)
f^{\prime \prime }(t)dt+\dint\limits_{x}^{b}\frac{t}{2}\left( t-2b\right)
f^{\prime \prime }(t)dt\right]  \label{2.3} \\
&=&xf^{\prime }(x)-f(x)+\frac{a^{2}f^{\prime }(a)-b^{2}f^{\prime }(b)}{%
2\left( b-a\right) }+\frac{1}{b-a}\dint\limits_{a}^{b}f(t)dt  \notag
\end{eqnarray}%
It is easy to see that%
\begin{equation}
\frac{1}{b-a}\dint\limits_{a}^{b}K(x,t)dt=\frac{x^{2}}{2}-\frac{%
a^{2}+ab+b^{2}}{3}  \label{2.4}
\end{equation}%
and%
\begin{equation}
\dint\limits_{a}^{b}f^{\prime \prime }(t)dt=f^{\prime }(b)-f^{\prime }(a)
\label{2.5}
\end{equation}%
Using (\ref{2.3}), (\ref{2.4}) and (\ref{2.5}), we get%
\begin{eqnarray*}
&&xf^{\prime }(x)-f(x)+\frac{a^{2}f^{\prime }(a)-b^{2}f^{\prime }(b)}{%
2\left( b-a\right) }-\left( \frac{x^{2}}{2}-\frac{a^{2}+ab+b^{2}}{3}\right) 
\frac{f^{\prime }(b)-f^{\prime }(a)}{b-a}+\frac{1}{b-a}\dint%
\limits_{a}^{b}f(t)dt \\
&=&\frac{1}{b-a}\dint\limits_{a}^{b}K(x,t)f^{\prime \prime }(t)dt-\frac{1}{%
\left( b-a\right) ^{2}}\dint\limits_{a}^{b}f^{\prime \prime
}(t)dt\dint\limits_{a}^{b}K(x,t)dt
\end{eqnarray*}%
We denote%
\begin{equation*}
R_{n}(x)=\frac{1}{b-a}\dint\limits_{a}^{b}K(x,t)f^{\prime \prime }(t)dt-%
\frac{1}{\left( b-a\right) ^{2}}\dint\limits_{a}^{b}f^{\prime \prime
}(t)dt\dint\limits_{a}^{b}K(x,t)dt
\end{equation*}%
If we write $R_{n}(x)$ as following with $C\in 
\mathbb{R}
$ which is an arbitrary constant, then we have%
\begin{equation}
R_{n}(x)=\frac{1}{b-a}\dint\limits_{a}^{b}\left( f^{\prime \prime
}(t)-C\right) \left[ K(x,t)-\frac{1}{b-a}\dint\limits_{a}^{b}K(x,s)ds\right]
dt  \label{2.6}
\end{equation}%
We know that%
\begin{equation}
\dint\limits_{a}^{b}\left[ K(x,t)-\frac{1}{b-a}\dint\limits_{a}^{b}K(x,s)ds%
\right] dt=0  \label{2.7}
\end{equation}%
So, if we choose $C=\gamma $ in (\ref{2.6}). Then we get%
\begin{equation*}
R_{n}(x)=\frac{1}{b-a}\dint\limits_{a}^{b}\left( f^{\prime \prime
}(t)-\gamma \right) \left[ K(x,t)-\frac{1}{b-a}\dint\limits_{a}^{b}K(x,s)ds%
\right] dt
\end{equation*}%
and%
\begin{equation}
\left\vert R_{n}(x)\right\vert \leq \frac{1}{b-a}\max_{t\in \lbrack
a,b]}\left\vert K(x,t)-\left( \frac{x^{2}}{2}-\frac{a^{2}+ab+b^{2}}{3}%
\right) \right\vert \dint\limits_{a}^{b}\left\vert f^{\prime \prime
}(t)-\gamma \right\vert dt  \label{2.8}
\end{equation}%
Since%
\begin{equation*}
\max_{t\in \lbrack a,b]}\left\vert K(x,t)-\left( \frac{x^{2}}{2}-\frac{%
a^{2}+ab+b^{2}}{3}\right) \right\vert =\frac{\left( b-a\right) ^{2}}{3}
\end{equation*}%
and%
\begin{eqnarray*}
\dint\limits_{a}^{b}\left\vert f^{\prime \prime }(t)-\gamma \right\vert dt
&=&f^{\prime }(b)-f^{\prime }(a)-\gamma \left( b-a\right) \\
&=&\left( S-\gamma \right) \left( b-a\right)
\end{eqnarray*}%
from (\ref{2.8}), we have%
\begin{equation}
\left\vert R_{n}(x)\right\vert \leq \frac{\left( b-a\right) ^{2}}{3}\left(
S-\gamma \right)  \label{2.9}
\end{equation}%
which gives (\ref{2.1}).

Second, if we choose $C=\Gamma $ in (\ref{2.6}) and by a similar argument we
get%
\begin{equation}
\left\vert R_{n}(x)\right\vert \leq \frac{1}{b-a}\max_{t\in \lbrack
a,b]}\left\vert K(x,t)-\left( \frac{x^{2}}{2}-\frac{a^{2}+ab+b^{2}}{3}%
\right) \right\vert \dint\limits_{a}^{b}\left\vert f^{\prime \prime
}(t)-\Gamma \right\vert dt  \label{2.10}
\end{equation}%
and%
\begin{eqnarray}
\dint\limits_{a}^{b}\left\vert f^{\prime \prime }(t)-\Gamma \right\vert dt
&=&\Gamma \left( b-a\right) -f^{\prime }(b)+f^{\prime }(a)  \label{2.11} \\
&=&\left( \Gamma -S\right) \left( b-a\right)  \notag
\end{eqnarray}%
From (\ref{2.10}) and (\ref{2.11}), we get (\ref{2.2}).
\end{proof}

\begin{theorem}
Let $f:I\rightarrow 
\mathbb{R}
,$ where $I\subset 
\mathbb{R}
$ is an interval, be a twice continuously differentiable mapping in the
interior of $I$ with $f^{\prime \prime }\in L_{2}[a,b]$ and $a,b\in
I^{o},a<b.$ Then we have%
\begin{eqnarray}
&&\left\vert f(x)-xf^{\prime }(x)-\frac{a^{2}f^{\prime }(a)-b^{2}f^{\prime
}(b)}{2\left( b-a\right) }\right.  \label{2.12} \\
&&\left. -\left( \frac{x^{2}}{2}-\frac{a^{2}+ab+b^{2}}{3}\right) \frac{%
f^{\prime }(b)-f^{\prime }(a)}{b-a}-\frac{1}{b-a}\dint\limits_{a}^{b}f(t)dt%
\right\vert  \notag \\
&\leq &\frac{\left( b-a\right) ^{2}}{3}\left( S-f^{\prime \prime }\left( 
\frac{a+b}{2}\right) \right)  \notag
\end{eqnarray}%
where $S=\frac{f^{\prime }(b)-f^{\prime }(a)}{b-a}.$
\end{theorem}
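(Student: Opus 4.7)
The plan is to reuse the integral identity (2.6) that was derived in the preceding theorem and simply make a different choice of the free constant $C$. Recall that after integrating by parts the kernel $K(x,t)$ against $f''$, we obtained
\begin{equation*}
R_n(x) = \frac{1}{b-a}\int_a^b \bigl(f''(t)-C\bigr)\left[K(x,t)-\frac{1}{b-a}\int_a^b K(x,s)\,ds\right]dt,
\end{equation*}
and this $R_n(x)$ is precisely the left-hand side of (2.12). So no new algebraic identity is needed; only a new estimate of $|R_n(x)|$.

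First I would specialise to $C=f''\!\left(\frac{a+b}{2}\right)$. Taking absolute values, pulling the uniform bound
\begin{equation*}
\max_{t\in[a,b]}\left|K(x,t)-\left(\frac{x^{2}}{2}-\frac{a^{2}+ab+b^{2}}{3}\right)\right|=\frac{(b-a)^{2}}{3}
\end{equation*}
outside the integral (this was already verified in the proof of the previous theorem), yields
\begin{equation*}
|R_n(x)|\le \frac{(b-a)}{3}\int_a^b \left|f''(t)-f''\!\left(\tfrac{a+b}{2}\right)\right|dt.
\end{equation*}

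The remaining step is to rewrite the integral of $|f''(t)-f''(\frac{a+b}{2})|$ in closed form. Here I would use (2.5): $\int_a^b f''(t)\,dt=f'(b)-f'(a)=S(b-a)$, so that
\begin{equation*}
\int_a^b \bigl(f''(t)-f''(\tfrac{a+b}{2})\bigr)dt = (b-a)\bigl(S-f''(\tfrac{a+b}{2})\bigr).
\end{equation*}
Combining with the previous display reproduces the right-hand side of (2.12), namely $\frac{(b-a)^{2}}{3}\bigl(S-f''(\frac{a+b}{2})\bigr)$.

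The main obstacle, and the step I would scrutinise carefully, is the passage from $\int|f''(t)-f''(\frac{a+b}{2})|\,dt$ to $\int(f''(t)-f''(\frac{a+b}{2}))\,dt$: these are equal only when $f''(t)\ge f''(\frac{a+b}{2})$ throughout $[a,b]$, i.e.\ when the midpoint value is (at most) the minimum of $f''$ on $[a,b]$. In the spirit of the preceding theorem (where the choice $C=\gamma$ worked precisely because $\gamma$ was the global minimum) the natural reading is that $f''(\frac{a+b}{2})$ plays the role of $\gamma$, and the cleanest form of the conclusion should probably feature $|S-f''(\frac{a+b}{2})|$. With that interpretation the remainder of the argument is a verbatim copy of the estimate (2.8)--(2.9).
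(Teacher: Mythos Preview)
Your approach is exactly the paper's: reuse the identity (2.6), set $C=f''\!\left(\frac{a+b}{2}\right)$, pull out the sup-norm $\frac{(b-a)^{2}}{3}$, and evaluate the remaining $L^{1}$ integral. The paper dispatches that last step with ``By a simple computation,'' so the concern you raise about dropping the absolute value is not resolved there either; it is a genuine weakness of the stated result rather than a defect in your reproduction of the argument.
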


\begin{proof}
$R_{n}(x)$ be defined as above, we can write%
\begin{equation}
R_{n}(x)=\frac{1}{b-a}\dint\limits_{a}^{b}\left( f^{\prime \prime
}(t)-C\right) \left[ K(x,t)-\frac{1}{b-a}\dint\limits_{a}^{b}K(x,s)ds\right]
dt  \label{2.13}
\end{equation}%
If we choose $C=f^{\prime \prime }\left( \frac{a+b}{2}\right) $, we get%
\begin{equation*}
\left\vert R_{n}(x)\right\vert \leq \frac{1}{b-a}\max_{t\in \lbrack
a,b]}\left\vert K(x,t)-\left( \frac{x^{2}}{2}-\frac{a^{2}+ab+b^{2}}{3}%
\right) \right\vert \dint\limits_{a}^{b}\left\vert f^{\prime \prime
}(t)-f^{\prime \prime }\left( \frac{a+b}{2}\right) \right\vert dt
\end{equation*}%
By a simple computation, we get the required result.
\end{proof}

\end{document}